\title{An Erd\H{o}s--Gallai-type theorem for keyrings}
\author{Alexander Sidorenko}
\institute{
A. Sidorenko 
ORCID: 0000-0002-1755-4013
\at 16 Sunrise Drive, Armonk, NY 10504, USA 
\\\email sidorenko.ny@gmail.com
}
\date{\today}
\begin{document}
\maketitle

\begin{abstract}
A keyring is a graph obtained by appending $r \geq 1$ leaves 
to one of the vertices of a cycle. 
We prove that for every $r \leq (k-1)/2$, 
any graph with average degree more than $k-1$ 
contains a keyring with $r$ leaves and at least $k$ edges. 
\end{abstract}

\keywords{
Erd\H{o}s--Gallai theorem, Erd\H{o}s--S\'{o}s conjecture
}
\subclass{05C35, 05C05}

\section{Introduction}

The graphs considered in this paper are simple and undirected. 
The sets of vertices and edges of a graph $G$ 
are denoted by $V(G)$ and $E(G)$, respectively. 
Their sizes are $v(G)=|V(G)|$ and $e(G)=|E(G)|$. 
For $u \in V(G)$, 
we denote by $N_G(u)$ the set of vertices adjacent to $u$ in $G$, 
and $d_G(u)=|N_G(u)|$ is the {\it degree} of $u$. 
For a subset of vertices $X \subset V(G)$, 
the number of edges of $G$ with at least one end in $X$ 
is denoted by $d_G(X)$.

Let $\mathcal{D}_k$ be the class of all graphs 
whose average degree is strictly more than $k-1$. 
In other words, $G \in\mathcal{D}_k\;$ if $\;2e(G) > (k-1) v(G)$. 
Erd\H{o}s and Gallai proved the following two statements.

\begin{theorem}[Theorem 2.6 \cite{EG}]
\label{EG_path}
Any graph $G \in \mathcal{D}_k$ contains a path of $k$ edges.
\end{theorem}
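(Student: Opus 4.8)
The plan is to prove the logically equivalent contrapositive: if $G$ contains no path of $k$ edges, then $2e(G) \le (k-1)\,v(G)$. I would argue by induction on $v(G)$. First I would reduce to the connected case: if $G$ is disconnected, then each component $C$ also contains no path of $k$ edges and has strictly fewer vertices, so the inductive hypothesis gives $2e(C) \le (k-1)\,v(C)$, and summing over all components yields the claim for $G$. From now on I therefore assume $G$ is connected.

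Next I would fix a longest path $P = v_0 v_1 \cdots v_\ell$ in $G$; by hypothesis $\ell \le k-1$. If $P$ is spanning, then $v(G) = \ell + 1 \le k$, and a graph on at most $k$ vertices satisfies $2e(G) \le v(G)\,(v(G)-1) \le (k-1)\,v(G)$, so the bound holds directly. The substantive case is therefore that $P$ misses some vertex, i.e.\ $v(G) > \ell + 1$, with $G$ connected.

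The central reduction is to delete an endpoint of small degree. Because $P$ is a longest path, every neighbour of $v_0$ and every neighbour of $v_\ell$ lies on $P$. Suppose one endpoint, say $v_0$, satisfies $d_G(v_0) \le (k-1)/2$. Deleting $v_0$ leaves a graph on $v(G)-1$ vertices that is still free of a path with $k$ edges, so the inductive hypothesis gives $2e(G-v_0) \le (k-1)\,(v(G)-1)$; restoring the at most $(k-1)/2$ incident edges then yields $2e(G) \le (k-1)\,(v(G)-1) + (k-1) = (k-1)\,v(G)$, exactly as required.

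The hard part is the remaining case, in which \emph{both} endpoints of the longest path have degree exceeding $(k-1)/2 \ge \ell/2$. Here I would invoke the P\'osa rotation technique: from $P$ one generates further longest paths by rotating at $v_0$, and each neighbour $v_j$ of $v_0$ yields a longest path with endpoint $v_{j-1}$. Letting $S$ be the set of all endpoints of longest paths obtained from $P$ by repeated rotations (holding $v_\ell$ fixed), one shows that $S$ is large relative to the set of vertices adjacent to it, and that no vertex of $S$ has a neighbour off $P$. Since $G$ is connected and some vertex lies off $P$, this structural restriction forces a path longer than $P$, contradicting its maximality; hence this case cannot occur, so a low-degree endpoint always exists and the induction closes. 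I expect the careful bookkeeping in this rotation step, namely verifying that dense endpoint neighbourhoods cannot coexist with an off-path vertex, to be the main obstacle.
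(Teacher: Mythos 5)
The paper states this result only as a citation (Theorem~2.6 of Erd\H{o}s--Gallai) and contains no proof of it, so there is no internal proof to compare against; I am judging your argument on its own terms. Your overall architecture is the classical one and is correct as far as it goes: the reduction to connected graphs, the spanning-path case ($v(G)\le k$ gives $2e(G)\le v(G)(v(G)-1)\le (k-1)v(G)$), and the deletion of a path-endpoint of degree at most $(k-1)/2$ (each restored edge adds $2$ to $2e$, so $2e(G)\le (k-1)(v(G)-1)+(k-1)$) are all sound, and the observation that all neighbours of $v_0$ and $v_\ell$ lie on $P$ is used correctly.

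The genuine gap is in the final case, which you yourself flag as the main obstacle but do not close, and the mechanism you sketch there does not work as stated. Connectivity plus the existence of an off-path vertex does \emph{not} by itself force a path longer than $P$: the off-path vertex may attach only to interior vertices of $P$, which creates no longer path. The iterated P\'osa rotation set $S$ is also not the natural tool here, and ``$S$ is large relative to its neighbourhood'' does not obviously produce the needed contradiction. The missing idea is the crossing-chord step: since $d_G(v_0)+d_G(v_\ell)>k-1\ge \ell$ and $N(v_0)\subseteq\{v_1,\dots,v_\ell\}$, $N(v_\ell)\subseteq\{v_0,\dots,v_{\ell-1}\}$, the pigeonhole principle yields an index $i$ with $v_0v_{i+1}\in E(G)$ and $v_\ell v_i\in E(G)$, whence $(v_0,v_{i+1},\dots,v_\ell,v_i,v_{i-1},\dots,v_0)$ is a cycle on all of $V(P)$. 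Only now does connectivity finish the job: every vertex of $V(P)$ is an endpoint of some path of length $\ell$ with vertex set $V(P)$, so any edge from $V(P)$ to the (nonempty) outside extends to a path of length $\ell+1$, contradicting maximality. With this step inserted, your proof is complete; without it, the key case is unproved.
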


\begin{theorem}[Theorem 2.7 \cite{EG}]
\label{EG_cycle}
Any graph $G \in \mathcal{D}_k$ contains a cycle with at least $k$ edges.
\end{theorem}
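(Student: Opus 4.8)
The plan is to prove the equivalent, and slightly stronger, contrapositive edge bound: if $G$ has no cycle with at least $k$ edges (that is, its circumference is at most $k-1$), then $2e(G) \le (k-1)(v(G)-1)$. This suffices, since the hypothesis $2e(G) > (k-1)v(G)$ already gives $2e(G) > (k-1)(v(G)-1)$, forcing the forbidden long cycle to exist. I would argue by induction on $v(G)$, the cases $v(G) \le 2$ being trivial.

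The first move is a reduction to $2$-connected graphs via the block decomposition, and the role of the $(v(G)-1)$ term is precisely that this bound is additive across cut vertices. If $G$ is disconnected with components $G_1, \dots, G_t$, then summing the inductive bounds gives $2e(G) = \sum_i 2e(G_i) \le (k-1)\sum_i (v(G_i)-1) = (k-1)(v(G)-t) \le (k-1)(v(G)-1)$. If $G$ is connected but has a cut vertex, write $G = G_1 \cup G_2$ with $V(G_1) \cap V(G_2)$ a single vertex, so that $v(G_1)+v(G_2) = v(G)+1$ and $e(G_1)+e(G_2)=e(G)$; the inductive bounds then sum to $(k-1)(v(G_1)+v(G_2)-2) = (k-1)(v(G)-1)$. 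In either case each piece is a subgraph of $G$, hence has circumference at most $k-1$, so the induction hypothesis applies. This leaves the case where $G$ is $2$-connected.

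For $2$-connected $G$ I would split on $v(G)$. If $v(G) \le k-1$, then $2e(G) \le v(G)(v(G)-1) \le (k-1)(v(G)-1)$ and we are done. If $v(G) \ge k$, the key claim is that $G$ contains a vertex $v$ with $d_G(v) \le (k-1)/2$. Granting this, delete $v$: the graph $G-v$ still has circumference at most $k-1$, so by induction $2e(G-v) \le (k-1)(v(G)-2)$, whence $2e(G) = 2e(G-v) + 2 d_G(v) \le (k-1)(v(G)-2) + (k-1) = (k-1)(v(G)-1)$, closing the induction.

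Everything therefore reduces to the key claim, which is the contrapositive of Dirac's classical bound that a $2$-connected graph has a cycle of length at least $\min(v(G),\, 2\delta(G))$, where $\delta(G)$ denotes the minimum degree: if every vertex had degree exceeding $(k-1)/2$, then $2\delta(G) \ge k$, and since $v(G) \ge k$ this would produce a cycle with at least $k$ edges. Establishing this Dirac-type bound is the main obstacle and the only substantial step. I would obtain it by the standard longest-path rotation argument: take a longest path $P = v_0 v_1 \cdots v_\ell$, so that maximality forces every neighbor of $v_0$ and of $v_\ell$ to lie on $P$ and $\ell \ge \delta(G)$; if some $v_i \in N_G(v_\ell)$ has $v_{i+1} \in N_G(v_0)$, then $v_0 \cdots v_i v_\ell v_{\ell-1} \cdots v_{i+1} v_0$ is a cycle through every vertex of $P$. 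Comparing the $\ell$ available positions on $P$ against the $d_G(v_0)+d_G(v_\ell)$ positions required by the two endpoints forces such a crossing once the degrees are large, while $2$-connectivity rules out the competing possibilities of a strictly longer path or a separating configuration; carrying this counting out carefully yields the bound $\min(v(G),2\delta(G))$. The path theorem (Theorem~\ref{EG_path}) is not actually required along this route, the longest path being taken directly in $G$.
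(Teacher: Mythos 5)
The paper offers no proof of Theorem~\ref{EG_cycle} at all --- it is imported verbatim from Erd\H{o}s and Gallai \cite{EG} --- so there is no internal argument to compare against; what you have written is a self-contained proof, and in fact a proof of the stronger edge bound $e(G)\leq (k-1)(v(G)-1)/2$ that the paper mentions immediately after the statement. Your architecture is the standard modern route to that bound: prove the contrapositive by induction on $v(G)$, exploit the fact that the quantity $(k-1)(v(G)-1)$ is additive over components and over cut-vertex splits, kill small $2$-connected graphs with $2e(G)\leq v(G)(v(G)-1)$, and in the remaining case ($2$-connected, $v(G)\geq k$) invoke Dirac's circumference bound $\min\{v(G),2\delta(G)\}$ to produce a vertex of degree at most $(k-1)/2$ whose deletion closes the induction. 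The details check out: the splitting arithmetic is right, integrality upgrades $\delta(G)>(k-1)/2$ to $2\delta(G)\geq k$, and $2d_G(v)\leq k-1$ is exactly what the deletion step needs. (Implicitly $k\geq 3$ throughout, as it must be: the statement fails for $k=2$, e.g.\ for a path on three vertices, and your base case $v(G)\leq 2$ also needs $k\geq 3$.) As you note, Theorem~\ref{EG_path} is never used.

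One caution about your final paragraph: the rotation sketch, as written, does not prove Dirac's bound. The crossing argument yields a genuine dichotomy. If a crossing exists, you get a cycle through all of $V(P)$, and maximality of $P$ plus connectivity forces that cycle to be Hamiltonian --- fine. But if no crossing exists, comparing the sets $\{i: v_{i+1}\in N_G(v_0)\}$ and $\{i: v_i\in N_G(v_\ell)\}$ inside $\{0,\dots,\ell-1\}$ gives only $\ell\geq d_G(v_0)+d_G(v_\ell)\geq 2\delta(G)$, i.e.\ a \emph{long path}, not a long cycle; converting it into a cycle of length at least $2\delta(G)$ is precisely the nontrivial content of Dirac's theorem, and ``$2$-connectivity rules out a separating configuration'' is a placeholder rather than an argument (the usual proofs analyze a longest cycle together with a fan/Menger argument, or work with the extreme chords $v_0v_h$ and $v_sv_\ell$ more delicately). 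Since Dirac's 1952 bound is classical, you are entitled to cite it, and with that citation your proof is complete and correct; but as a from-scratch derivation the sketch has a gap exactly at the step you yourself identified as the main obstacle.
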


In fact, the statement of Theorem~2.7 in \cite{EG} is even stronger: 
an $n$-vertex graph without cycles of length $k$ (or more) 
has at most $(n-1)(k-1)/2$ edges, 
and less than that if $n-1$ is not a multiple of $k-2$. 
Faudree and Schelp~\cite{Faudree} 
and independently Kopylov~\cite{Kopylov} 
determined (for every $n$ and $k$) the largest number of edges 
in an $n$-vertex graph without a $k$-edge path. 
They also described the extremal graphs. 
Kopylov~\cite{Kopylov} did the same for 2-connected graphs 
without cycles of length $k$ and more.

Together, Theorem \ref{EG_path} and a simple observation 
that every graph $G \in \mathcal{D}_k$ contains a star with $k$ edges, 
led to the following conjecture formulated by Erd\H{o}s and S\'{o}s (see \cite{Erd}).

\vspace{0.3cm}

\noindent
{\bf Erd\H{o}s--S\'{o}s Conjecture.}
{\it
Any graph $G \in \mathcal{D}_k$ contains every tree with $k$ edges.
}

\vspace{0.3cm}

Let $\mathcal{T}_k$ be the class of $k$-edge trees $T$ such that 
every $G \in \mathcal{D}_k$ contains $T$ as a subgraph.
The Erd\H{o}s--S\'{o}s conjecture states that 
all $k$-edge trees belong to $\mathcal{T}_k$.
Ajtai, Koml\'{o}s, Simonovits, and Szemer\'{e}di \cite{AKSS1,AKSS2,AKSS3} 
proved that there exists $k_0$ such that 
the conjecture holds for all $k > k_0$.
Still, the general case has not been solved, 
and only partial results have been obtained.

A {\em spider} $S(k_1,k_2,\ldots,k_r)$ is a tree obtained 
from $r$ disjoint paths of lengths $k_1,k_2,\ldots,k_r$ 
by combining their starting vertices into one. 
The combined vertex has degree $r$ and is called the {\it center}. 
Obviously, $S(k_1,k_2)$ is just a path of length $k_1+k_2$. 
Wo\'{z}niak \cite{Woz} proved that 
$S(k_1,k_2,\ldots,k_r) \in \mathcal{T}_k$ if $k_1,k_2,\ldots,k_r \leq 2$.
Fan and Sun \cite{FS} proved that
$S(k_1,k_2,\ldots,k_r) \in \mathcal{T}_k$ 
when $r=3$ or $k_1,k_2,\ldots,k_r \leq 4$. 
Very recently, Fan, Hong and Liu~\cite{FHL}
proved that all spiders belong to $\mathcal{T}_k$. 
McLennan \cite{McL} proved that $T \in \mathcal{T}_k$ 
when $T$ is a tree of diameter 4. 

It was mentioned in Section~3 of~\cite{Mos} that Perles 
proved the Erd\H{o}s--S\'{o}s conjecture for caterpillars 
(those are trees which do not contain $S(2,2,2)$ as a subgraph). 
The proof was published in~\cite{Kalai} only recently. 

A vertex which is adjacent to a leaf is called a {\it preleaf}.
It was proved in~\cite{Sid} that 
if a tree $T$ has a preleaf 
which is adjacent to at least $(k-1)/2$ leaves 
then $T \in \mathcal{T}_k$. 
We will prove in Section~\ref{proof_preleaves} a stronger statement:

\begin{theorem}
\label{preleaves}
If a tree $T$ with $k$ edges has $p$ preleaves 
and one of them is adjacent to at least $(k-p-1)/2$ leaves 
then $T \in \mathcal{T}_k$.
\end{theorem}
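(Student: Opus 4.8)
\noindent The plan is to work inside a subgraph of large minimum degree and then build $T$ around the distinguished preleaf $w$, treating its $\ell \ge (k-p-1)/2$ leaves as an interchangeable reservoir that is filled in last. First I would pass from $G$ to a nonempty subgraph $H$ with $\delta(H) > (k-1)/2$. This is standard: as long as some vertex $u$ has $d_H(u) \le (k-1)/2$, delete it; since $2 d_H(u) \le k-1$, the quantity $2e(H) - (k-1) v(H)$ does not decrease, so it stays positive and the process halts at a nonempty $H \in \mathcal{D}_k$ with the desired minimum degree. Because $\delta(H)$ is an integer exceeding $(k-1)/2$, we get $2\delta(H) \ge k$. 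It therefore suffices to embed $T$ into $H$. It is convenient to rewrite the hypothesis on $w$ as $2\ell + p + 1 \ge k$, where $\ell$ is the number of leaves at $w$; the surplus $2\ell + p + 1 - k \ge 0$ is the slack I spend later.

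Next I would split $T$ as $T_0$ together with the $\ell$ leaves at $w$, where $T_0 = T$ minus those leaves, so $e(T_0) = k-\ell$ and $v(T_0) = k+1-\ell$. The intended order of construction is: embed $T_0$ into $H$ by a greedy vertex-by-vertex extension (which succeeds while the part being placed is small compared with $\delta(H)$), and then attach the $\ell$ leaves of $w$ to $\ell$ still-unused neighbours of $\phi(w)$. The point of peeling off exactly the leaves at $w$ is that they carry no further structure, so any $\ell$ free neighbours of $\phi(w)$ will do.

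The main obstacle is this final attachment step. After $T_0$ has been placed it occupies $k+1-\ell$ vertices, and the crude estimate only guarantees about $\delta(H) - (k+1-\ell) \approx \ell - k/2$ free neighbours of $\phi(w)$, which falls short of $\ell$. Thus a naive greedy embedding is genuinely insufficient, and the heart of the proof is to produce a reservoir of at least $\ell$ vertices adjacent to $\phi(w)$ that can be kept free. I see two routes and would develop whichever is cleaner. The first is induction on the number of preleaves $p$: the base case $p=1$ is essentially the regime $\ell \ge (k-1)/2$ handled in \cite{Sid} (with the single extra value $\ell = k/2-1$, occurring when $k$ is even, checked directly). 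For $p \ge 2$ I would delete one leaf from a preleaf $q \ne w$, obtaining a $(k-1)$-edge tree $T^-$; the hypothesis is preserved in both cases, whether $q$ remains a preleaf (still $p$ preleaves, threshold $(k-p-2)/2$) or loses its last leaf (now $p-1$ preleaves, threshold $(k-p-1)/2$), since $\ell \ge (k-p-1)/2$ exceeds both. One then embeds $T^-$ using $G \in \mathcal{D}_k \subseteq \mathcal{D}_{k-1}$ and restores the deleted leaf.

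Making this induction go through requires strengthening the inductive statement so that the embedding retains a free neighbour at the preleaf one intends to augment; tracking this slack, paid for by the surplus in $2\ell+p+1 \ge k$ and by the flexibility of the other preleaves' leaves, is the delicate part. The second, and to my mind more robust, route is to invoke Theorem~\ref{EG_path} to obtain a path with $k$ edges in $H$ and to run a rotation argument from its endpoint: the set of vertices reachable as endpoints of longest paths is large and their neighbourhoods are confined to the path, which supplies many vertices available to receive the leaves of $w$ beyond what the minimum-degree count alone provides. In either route the crux is the reservoir estimate; once at least $\ell$ free neighbours of $\phi(w)$ are secured, the embedding of $T$ into $H$, and hence into $G$, is completed at once.
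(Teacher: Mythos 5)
There is a genuine gap: you correctly isolate the crux --- securing a reservoir of free neighbours at the image of the distinguished preleaf (route 2), or equivalently a free neighbour at the preleaf whose deleted leaf must be restored (route 1) --- and then leave it unresolved in both routes. In route 1, after embedding the $(k-1)$-edge tree $T^-$ into $G$, the image of $q$ may well have all of its $G$-neighbours inside the image of $T^-$ (minimum degree in a $k$-minimal graph is only about $k/2$, while $T^-$ occupies $k$ vertices), so ``restores the deleted leaf'' is exactly the unproved step; the ``strengthened inductive statement'' you gesture at would have to guarantee a spare neighbour at an arbitrary prescribed preleaf, and no mechanism for proving that is supplied. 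Route 2 is vaguer still: a rotation argument on a longest path produces many path-endpoints, but no argument is given connecting that to an embedding of $T$ with $\ell$ free neighbours at $\phi(w)$. The initial reduction to $\delta(H)>(k-1)/2$ and the observation that greedy embedding falls short are both correct, but they constitute the setup, not the proof.

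For comparison, the paper resolves precisely this obstacle by a different induction and an augmenting-path device. It fixes $k$ and inducts on $m(T)$, the maximum number of leaves at a single preleaf (base case: the star, $m=k$), passing from $T$ to a tree $T'$ with one more leaf at the heaviest preleaf $u$ and one fewer at another preleaf $v$. Given a copy of $T'$ in a $k$-minimal $G$, it builds an auxiliary bipartite digraph on preleaves and leaves whose directed paths from $v$ to $u$ or to an unused vertex of $G$ correspond to alternating edge exchanges that reassign leaves among preleaves and convert the copy of $T'$ into a copy of $T$. When no such path exists, the set $X$ of preleaves reachable from $v$ has all its $G$-neighbours confined to $V(T')\setminus(X\cup Y\cup Z)$, and the hypothesis $m+1\geq (k-p+1)/2$ forces $d_G(X)\leq\frac{k-1}{2}|X|$, contradicting the set version of $k$-minimality (Remark~\ref{minimal}); note that the full set version, not just the minimum-degree bound you extract, is what makes this counting close. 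This leaf-reassignment step is exactly the missing ingredient your route 1 would need.
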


Trees and cycles are not the only subgraphs whose existence can be deduced 
from the graph's average degree. 
As a referee of this paper pointed out, it was Tur\'{a}n who formulated 
the very problems for which the Erd\H{o}s--Gallai theorems provide the answers. 
He asked the maximal number of edges in an $n$-vertex graph 
which does not contain a {\it lasso}, 
that is a cycle and a path having one common vertex. 
Fan and Sun \cite{FS} solved the lasso problem 
(but did not formulate their result explicitly) 
within the proof of their Theorem~3.1. 
In this paper, we consider a similar forbidden pattern. 
A {\it keyring} $C_r(l)$ is a $(l+r)$-edge graph 
obtained from a cycle of length $l$ 
by appending $r$ leaves to one of its vertices. 
This vertex has degree $r+2$ and is called the {\it center} of the keyring.
In Section~\ref{proof_keyring}, 
we prove an analog of Theorem \ref{EG_cycle} for keyrings:

\begin{theorem}
\label{keyring}
For any positive integer $r \leq (k-1)/2$, 
every graph $G \in \mathcal{D}_k$ 
contains a keyring with $r$ leaves and at least $k$ edges.
\end{theorem}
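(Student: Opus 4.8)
The plan is to combine a standard degree reduction with Theorem~\ref{EG_cycle} and a chord-rerouting (surgery) argument. I would argue by induction on $v(G)$. If $G$ has a vertex $u$ with $d_G(u) \leq (k-1)/2$, then $2e(G-u) = 2e(G) - 2d_G(u) > (k-1)v(G) - (k-1) = (k-1)v(G-u)$, so $G-u \in \mathcal{D}_k$ and the induction hypothesis already supplies the keyring. Since the average degree of $G$ is a convex combination of the average degrees of its components, some component lies in $\mathcal{D}_k$, so I may also assume $G$ is connected. After these reductions every vertex satisfies $d_G(u) > (k-1)/2 \geq r$, so the minimum degree is large enough to furnish the $r$ leaves that a center will need.

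Next I would apply Theorem~\ref{EG_cycle} to obtain a cycle with at least $k$ edges and fix a \emph{longest} cycle $C$, so that $e(C) \geq k$. If some vertex $v \in V(C)$ has at least $r$ neighbors outside $V(C)$, the proof is immediate: the cycle $C$ together with $r$ of these neighbors, viewed as leaves at $v$, is a keyring $C_r(l)$ with $l = e(C)$ and $l + r \geq k$ edges. The substantive case is when every vertex of $C$ has at most $r-1$ neighbors off $C$, and hence at least $d_G(v) - (r-1) > (k-1)/2 - (r-1)$ neighbors \emph{on} $C$; that is, $C$ is rich in chords. Here the idea is surgery: reroute $C$ along chords so as to detach exactly $r$ neighbors of one chosen center $v$ as leaves while retaining a cycle of length at least $k-r$. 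Concretely, a chord $v v_{p}$ splits $C$ into a short arc and a long arc; keeping the long arc exposes the neighbors of $v$ that lay on the short arc as leaves, and the hypothesis $r \leq (k-1)/2$ is exactly what lets the gain in leaves offset the loss in cycle length.

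The main obstacle is precisely this balancing act. A single chord suffices only when the center already has close to $k-1$ neighbors, whereas the reduction guarantees only about $k/2$; so cutting one short arc typically releases too few leaves without destroying the length. To overcome this I expect to iterate the rerouting, combining several chords and exploiting the extremal (P\'{o}sa-type crossing) conditions forced by $C$ being a \emph{longest} cycle to constrain where the neighbors of a prospective center may lie, thereby concentrating at least $r$ detachable neighbors on a single short arc. Checking that these constraints, together with $r \leq (k-1)/2$ and $d_G(v) > (k-1)/2$, always admit such a rerouting is the technical heart of the argument. An alternative I would keep in reserve is to run the analysis on a longest \emph{path}: P\'{o}sa rotations produce a set of reachable endpoints, all of whose neighbors lie on the path, and this set can be used to locate simultaneously a long cycle and a vertex carrying many pendant neighbors.
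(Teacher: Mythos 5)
Your overall strategy (find a long cycle, then trade neighbors on the cycle for pendant leaves at one vertex) points in the right direction, and your easy case --- some cycle vertex having $r$ neighbors off the cycle --- matches the paper. But there is a genuine gap exactly at the step you flag as the technical heart, and your reduction is too weak to close it. Deleting vertices of degree at most $(k-1)/2$ only yields minimum degree greater than $(k-1)/2$, so a prospective center $v$ with $r_v \leq r-1$ neighbors off the cycle is guaranteed only about $(k-1)/2 - r_v$ neighbors on it, i.e.\ barely more than $t = r - r_v$, the number of leaves you still have to manufacture. Any rerouting of the kind you describe needs roughly $2t+1$ on-cycle neighbors at a single vertex (you must be able to sacrifice $t$ of them as leaves while still closing a cycle of length at least $k-r$ through the remaining ones), and neither the P\'{o}sa rotations nor the ``concentration on a short arc'' you hold in reserve is shown to bridge this factor-of-two deficit; the neighbors of a given vertex need not be concentratable on one short arc at all.

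The paper closes precisely this gap with a stronger reduction and a different way of choosing the center. It passes to a $k$-minimal subgraph, for which \emph{every} vertex subset $X$ satisfies $d_G(X) > \frac{k-1}{2}|X|$ (Remark~\ref{minimal}), not merely the singletons your minimum-degree reduction controls. Applied to the vertex set $X$ of the long cycle this gives $\sum_{i}(t_i + 2r_i) = 2d_G(X) > (k-1)m$, so \emph{some} cycle vertex $u_i$ satisfies $t_i + 2r_i \geq k$; since $k \geq 2r+1$, that one vertex has $t_i \geq k - 2r_i \geq 2(r-r_i)+1$ neighbors on the cycle, which is enough. Lemma~\ref{Hamiltonian} then finishes not by concentrating neighbors on a short arc but by a median trick: sort $2t-1$ of the on-cycle neighbors of $u_i$ lying in a suitable middle zone and look at the $t$-th one; whichever half of the cycle it falls in, the $t-1$ neighbors on the far side of it (together with one cycle-neighbor of $u_i$) serve as leaves, and the cycle is closed the long way around with length at least $k-r+1$. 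You would need to import both ideas --- the subset form of the degree condition and the median argument --- to turn your sketch into a proof; the cycle also need not be chosen longest for this to work.
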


A graph is called $k$-{\it minimal} if it belongs to $\mathcal{D}_k$ 
but none of its proper subgraphs belongs to  $\mathcal{D}_k$. 
Obviously, any graph from $\mathcal{D}_k$ contains a $k$-minimal subgraph. 
Thus, in proofs of Theorems \ref{keyring}, \ref{preleaves} 
and similar statements, 
instead of considering all graphs $G \in \mathcal{D}_k$, 
it is sufficient to consider only those which are $k$-minimal. 
The main help comes from the simple observation:

\begin{remark}
\label{minimal}
If a graph $G$ is $k$-minimal then for any subset of vertices $X \subset V(G)$ 
the number of edges of $G$ with at least one end in $X$ 
is strictly more than $\frac{k-1}{2} |X|$.
\end{remark}

\section{Proof of Theorem \ref{preleaves}}
\label{proof_preleaves}

Let $m(T)$ denote the largest number of leaves connected to a single preleaf in $T$, 
$L(T)$ denote the set of leaves of $T$,
and $P(T)$ denote the set of preleaves.
In this proof, we will keep $k$ fixed and use induction in $m(T)$. 
The basis of induction is the case $m(T) = k$. 
In this case, $T$ is a star and belongs to $\mathcal{T}_k$. 
Now we are going to prove the inductive step. 
Suppose that $m(T) = m < k$ 
and the statement of the theorem holds for all trees $T'$ where $m(T') > m$. 
(We will make use of the assumption $m \geq (k-p-1)/2$ later.) 

Consider a $k$-minimal graph $G$. 
We need to show that $G$ contains $T$ as a subgraph.
Let $u$ be a preleaf of $T$ with $m$ leaves. 
Let $v$ be another preleaf of $T$ (since $m < k$, 
$\;T$ is not a star and has at least two preleaves). 
Now we disconnect in $T$ one of the leaves attached to $v$ and reconnect it to $u$ instead. 
The resulting tree $T'$ has $k$ edges and $m(T') = m+1$. 
Since $|P(T')| \geq |P(T)|-1$, 
we have $m(T') = m+1 \geq (k - |P(T')| - 1) / 2$, 
and by the induction hypothesis, $G$ must contain a copy of $T'$. 
We are going to transform this copy of $T'$ into a copy of $T$ 
by changing the assignment of leaves to the preleaves. 
From now on, we will assume that $V(T')$ is a subset of $V(G)$, 
and $E(T')$ is a subset of $E(G)$. 
We define three non-overlapping sets of vertices: 
$A = P(T') \cup \{ v \}$, 
$B = L(T') \; \backslash  \; \{ v \}$, 
$\; C = V(G) \; \backslash \; V(T')$. 
Notice that $v$ in $T'$ might remain a preleaf or become a leaf or neither.
In any case, we want to ensure that $v$ belongs to $A$ and not to $B$.
Thus, $|A| = |P(T)| = p$, $|B| = |L(T)|$.
Consider a bipartite directed graph $F$ whose vertex set is $A \cup B \cup C$ 
with directed edges of two types: 
\begin{enumerate}
  \item $(a,b)$ where $a \in A$, $\;b \in (B \cup C)$ 
        and $\{ a,b \} \in E(G)$; and 
  \item $(b,a)$ where $a \in A$, $\;b \in B$, 
        $\;a$ and $b$ are adjacent in $T'$ 
        (which means that leaf $a$ is connected to preleaf $b$).
\end{enumerate}
If there exists a path in $F$ from $v$ either to $u$ or to $C$, 
we will be able to find a copy of $T$ in $G$. 
Indeed, let $(a_0,b_0,a_1,b_1,\ldots,a_q, b_q)$ be a simple path 
where $a_0 = v$, $b_q \in C$, 
$\{ a_i, b_i \} \in E(G)$ $(i=0,1,\ldots,q)$, 
and $b_i \in B$, $\;a_{i+1} \in A$ 
are adjacent in $T'$ $(i=0,1,\ldots,q-1)$. 
Then we can add to $T'$ vertex $b_q$ 
as well as edges $\{ a_i, b_i \}$ for $i=0,1,\ldots,q$, 
remove edges $\{ a_{i+1}, b_i \}$ for $i=0,1,\ldots,q-1$ 
and remove one of the leaves connected to $u$. 
The resulting subgraph of $G$ is a copy of $T$. 

Similarly, let $(a_0,b_0,a_1,b_1,\ldots,a_q)$ be a simple path 
where $a_0 = v$, $a_q = u$, 
$\{ a_i, b_i \} \in E(G)$ $(i=0,1,\ldots,q-1)$, 
and $b_i \in B$, $\;a_{i+1} \in A$ 
are adjacent in $T'$ $(i=0,1,\ldots,q-1)$. 
Then we can add to $T'$ edges $\{ a_i, b_i \}$ for $i=0,1,\ldots,q-1$ 
and remove edges $\{ a_{i+1}, b_i \}$ for $i=0,1,\ldots,q-1$. 
The resulting subgraph of $G$ is a copy of $T$. 

Finally, consider the case when 
$C \cup \{ u \}$ is unreachable from $v$ in $F$. 
We split A into two subsets: 
$X$ consists of the vertices that are reachable from $v$ in $F$, 
and $Y$ consists of the rest. 
Obviously, $v \in X$ and $u \in Y$. 
Let $Z$ be the set of $m+1$ leaves that are attached to $u$.
There are no edges in $G$ between $X$ and $C \cup Y \cup Z$. 
Since $|X|+|Y|=p$ and $|Z| = m+1 \geq (k-p+1)/2$, 
then for any $w \in X$ we can estimate: 
\begin{eqnarray*}
 |N_G(w) \backslash X| \; & \leq & \; |V(T')|-(|X|+|Y|+|Z|) \\
                       \; & \leq & \; (k+1) - \left( p + \frac{k-p+1}{2} \right)
                       \;\;\; = \;\;\; \frac{k-p+1}{2} \; .
\end{eqnarray*}
Using $|X| \leq p-1$, we get
\begin{eqnarray*}
 d_G(X) \; & \leq & \; \frac{|X|(|X|-1)}{2} \;\;\; + \;\;\; |X| \; \frac{k-p+1}{2} 
        \;\;\; = \;\;\; |X| \; \frac{|X|+k-p}{2} \\
        \; & \leq & \; \frac{(p-1)+k-p}{2} \; |X| \;\;\; = \;\;\; \frac{k-1}{2} \; |X| 
\end{eqnarray*}
which, according to Remark \ref{minimal}, 
contradicts the $k$-minimality of $G$.
\qed

\section{Proof of Theorem \ref{keyring}}
\label{proof_keyring}

\begin{lemma}
\label{Hamiltonian}
Fix integers $\lambda \geq 2$ and $t \geq 1$. 
Let $H$ be a Hamiltonian graph with $m \geq \lambda$ vertices, 
and $u_0$ be one of them.
If $d_H(u_0) \geq 2t-1 + \max \{ 2\lambda-m-1, 2 \}$ 
then there exists $l \geq \lambda$ 
such that $H$ contains a copy of $C_t(l)$ with the center at $u_0$.
\end{lemma}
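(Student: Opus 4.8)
The plan is to exploit the Hamiltonicity of $H$ directly. First I would fix a Hamiltonian cycle and write its vertices as $v_0, v_1, \ldots, v_{m-1}$ in cyclic order with $u_0 = v_0$, calling the subscript of a vertex its \emph{position}. Since $u_0$ is joined to $v_1$ and $v_{m-1}$ along the Hamiltonian cycle, its neighbours occupy positions $1 = j_1 < j_2 < \cdots < j_d = m-1$, where $d = d_H(u_0)$. Every keyring I build will have the same shape: pick two neighbours $v_{j_s}, v_{j_{s'}}$ with $s < s'$, take the cycle $u_0, v_{j_s}, v_{j_s+1}, \ldots, v_{j_{s'}}, u_0$ (which runs along the Hamiltonian cycle and closes through $u_0$), and attach as leaves any neighbours of $u_0$ whose positions lie outside the interval $[j_s, j_{s'}]$. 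Such a cycle has length $l = (j_{s'} - j_s) + 2$, its center is $u_0$, and the leaves are automatically distinct from the cycle vertices. Hence it suffices to find a pair with $j_{s'} - j_s \geq \lambda - 2$ leaving at least $t$ neighbours outside $[j_s, j_{s'}]$.

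I would only need to test two candidate arcs, each leaving \emph{exactly} $t$ neighbours outside. The first, $[j_1, j_{d-t}]$, puts all $t$ leaves $j_{d-t+1}, \ldots, j_d$ at the top; the second, $[j_{t+1}, j_d]$, puts all $t$ leaves $j_1, \ldots, j_t$ at the bottom. Their spans are $f_1 = j_{d-t} - 1$ and $f_2 = (m-1) - j_{t+1}$, and the goal is to show that at least one of them is $\geq \lambda - 2$. Note first that the hypothesis, via $\max\{2\lambda - m - 1, 2\} \geq 2$, forces $d \geq 2t + 1$, equivalently $t + 1 \leq d - t$; this guarantees that both arcs are nondegenerate (their endpoints are distinct neighbours) and that each carries exactly $t$ leaves.

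The heart of the argument is a pigeonhole estimate under the assumption that both candidates fail. If $f_1 < \lambda - 2$ and $f_2 < \lambda - 2$, then $j_{d-t} \leq \lambda - 2$ and $j_{t+1} \geq m - \lambda + 2$. Consequently the $d - 2t \geq 1$ neighbours with indices $t+1, \ldots, d-t$ all have positions in the integer interval $[m - \lambda + 2, \lambda - 2]$, which contains only $2\lambda - m - 3$ positions. As these positions are distinct, $d - 2t \leq 2\lambda - m - 3$, that is $d \leq 2t + 2\lambda - m - 3$. But the hypothesis, via $\max\{2\lambda - m - 1, 2\} \geq 2\lambda - m - 1$, yields $d \geq 2t + 2\lambda - m - 2$, a contradiction. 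Hence at least one of the two arcs produces the desired $C_t(l)$ with $l \geq \lambda$.

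I expect the delicate point to be bookkeeping rather than depth: one must verify that both roles of the maximum are genuinely needed (the ``$2$'' keeps the middle block of neighbours nonempty and the arcs nondegenerate, while the ``$2\lambda - m - 1$'' drives the counting contradiction), and one must check the boundary cases where the interval $[m - \lambda + 2, \lambda - 2]$ degenerates to a single point or becomes empty. These cases correspond precisely to the two regimes $m \geq 2\lambda - 3$ and $m \leq 2\lambda - 3$ encoded by the $\max$, and in each the same two-arc test settles the claim.
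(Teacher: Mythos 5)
Your proof is correct and follows essentially the same route as the paper's: both order the neighbours of $u_0$ along a Hamiltonian cycle, build the keyring from an arc through $u_0$ closed by a chord (with the neighbours lying outside the arc serving as leaves), and rest on the same pigeonhole count in which $\max\{2\lambda-m-1,2\}$ bounds the neighbours trapped in the middle zone. The only difference is organizational: the paper selects $2t-1$ neighbours avoiding its bad middle set $X_3$ and takes their median, while you test the $(t+1)$-st and $(d-t)$-th neighbours directly and derive a contradiction if both fail --- the same counting argument in contrapositive form.
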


\begin{proof}
Let $(u_0,u_1,\ldots,u_{m-1},u_m=u_0)$ be a Hamiltonian cycle in $H$. 
Suppose first that $m=\lambda$. Since $m \geq 3$, 
we get $2\lambda-m-1 = m-1 \geq 2$ 
and $d_H(u_0) \geq 2t+m-2 \geq m = v(H)$ which is impossible.
Therefore, we may assume $m \geq \lambda+1$.
Denote 
\[ X_1 = \{u_2,u_3,\ldots,u_{m-\lambda+1}\}, \]
\[ X_2 = \{u_{\lambda-1},u_{\lambda},\ldots,u_{m-2}\}, \]
\[ X_3 = \{u_1,u_2,\ldots,u_{m-1}\} \; \backslash \; (X_1 \cup X_2). \]
Clearly, $|X_1 \cup X_2| = |X_1| + |X_2| = 2(m-\lambda)$ 
when $m-\lambda+1 < \lambda-1$, 
and $|X_1 \cup X_2| = m-3$ when $m-\lambda+1 \geq \lambda-1$.
Thus, $|X_1 \cup X_2| = \min \{ 2m-2\lambda, m-3 \}$ 
and $|X_3| = (m-1) - |X_1 \cup X_2| = \max \{ 2\lambda-m-1, 2 \}$.
Note that
\begin{eqnarray*}
 |N_H(u_0) \cap (X_1 \cup X_2)| & \geq & d_H(u_0) - |X_3| \\
    & \geq & (2t-1) + \max \{ 2\lambda-m-1, 2 \} - |X_3| \;\; = \;\; 2t-1.
\end{eqnarray*}
Thus, there exist 
$u_{i_1},u_{i_2},\ldots,u_{i_{2t-1}} \in N_H(u_0) \cap (X_1 \cup X_2)$ 
where $i_1 < i_2 < \ldots < i_{2t-1}$.
If $u_{i_t} \in X_1$ then $i_t \leq m-\lambda+1$.
In this case, $u_0$ is adjacent to 
$u_1,u_{i_1},u_{i_2},\ldots,\allowbreak u_{i_{t-1}}$ 
and belongs to the cycle $(u_0,u_{i_t},u_{i_t+1},\ldots,u_m=u_0)$ 
whose length is $m - i_t + 1$. 
This produces a copy of $C_t(l)$ with the center at $u_0$ 
where $l = m - i_t + 1 \geq m - (m-\lambda+1) + 1 = \lambda$.
Alternatively, if $u_{i_t} \in X_2$ then $i_t \geq \lambda-1$. 
In this case, $u_0$ is adjacent to $u_{i_{t+1}},u_{i_{t+2}},\ldots,u_{i_{2t-1}},u_{m-1}$ 
and belongs to the cycle $(u_0,u_1,\ldots,u_{i_t},u_m=u_0)$ 
whose length is $i_t + 1$. 
This produces a copy of $C_t(l)$ with the center at $u_0$ 
where $l = i_t + 1 \geq \lambda$.
\qed
\end{proof}

\noindent
{\it Proof of Theorem \ref{keyring}}
We are going to show that any $k$-minimal graph $G$ contains a copy of $C_r(l)$.
By Theorem \ref{EG_cycle}, $G$ contains a cycle 
$(u_0,u_1,\ldots,u_{m-1},u_m=u_0)$ of length $m \geq k$. 
Let $X = \{ u_0,u_1,\ldots,u_{m-1} \} $. 
Then $d_G(u_i) =  t_i + r_i$ where 
$t_i = |N_G(u_i) \cap X|$ and $r_i = |N_G(u_i) \backslash X|$ 
for $i=0,1,\ldots,m-1$. 
According to Remark \ref{minimal}, 
\[ \sum_{i=0}^{m-1} (t_i + 2r_i) = 2d_G(X) > m \cdot (k-1) . \]
Thus, there is such an index $i$ that $t_i + 2r_i \geq k$. 
If $r_i \geq r$ then $G$ contains a copy of $C_r(m)$ with the center at $u_i$. 
Suppose $r_i < r$. Let $H$ be the subgraph of $G$ induced by $X$.
We are going to apply Lemma \ref{Hamiltonian} with parameters 
$t = r - r_i$ and $\lambda = k-r+1$.
To be able to invoke it, we need to demonstrate that 
$\lambda \geq 2$ and 
\begin{eqnarray*}
  d_H(u_i) = t_i & \geq & 2(r-r_i) - 1 + \max \{ 2(k-r+1) - m - 1, 2 \} \\
   & = & \max \{ 2k - 2r_i - m, 2r - 2r_i + 1 \} \; .
\end{eqnarray*}
Indeed, on one hand, since $t_i + 2r_i \geq k$, we get 
$t_i \geq k - 2r_i = 2k - 2r_i - k \geq 2k - 2r_i - m$. 
On the other hand, since $r \leq (k-1)/2$, we get 
$t_i \geq k - 2r_i \geq 2r - 2r_i + 1$. 
Also, $r \leq (k-1)/2$ implies $\lambda = k-r+1 \geq 2$.
By Lemma \ref{Hamiltonian}, 
$H$ contains a copy of $C_t(l)$ with the center at $u_i$ 
where $t=r-r_i$ and $l \geq \lambda = k-r+1$. 
Now we use the $r_i$ vertices from $N_G(u_i) \backslash X$ to extend it to a copy of $C_r(l)$ 
where $l+r \geq k+1$.
\qed

\begin{acknowledgements}
The author would like to thank the referees for their suggestions and comments.
\end{acknowledgements}


\begin{thebibliography}{99}

\bibitem{AKSS1}
Ajtai M., Koml\'{o}s J., Simonovits M., Szemer\'{e}di E.: 
On the approximative solution of the Erd\H{o}s--S\'{o}s conjecture on trees 
(manuscript)

\bibitem{AKSS2}
Ajtai M., Koml\'{o}s J., Simonovits M., Szemer\'{e}di E.: 
Some elementary lemmas on the Erd\H{o}s--S\'{o}s conjecture for trees 
(manuscript)

\bibitem{AKSS3}
Ajtai M., Koml\'{o}s J., Simonovits M., Szemer\'{e}di E.: 
The solution of the Erd\H{o}s--S\'{o}s conjecture for large trees 
(manuscript)

\bibitem{Erd}
Erd\H{o}s P.: 
Extremal problems in graph theory. 
In: Fiedler M. (ed.) Theory of Graphs and its Applications, pp. 29–-36.
Academic Press (1965)

\bibitem{EG}
Erd\H{o}s P., Gallai T.: 
On maximal paths and circuits of graphs. 
Acta Math. Acad. Sci. Hungar. 10, 337–-356 (1959)

\bibitem{FS}
Fan G., Sun L.:
The Erd\H{o}s--S\'{o}s conjecture for spiders. 
Discrete Math. 307, 3055–-3062 (2007)

\bibitem{FHL}
Fan G., Hong Y., Liu Q.:
The Erd\H{o}s--S\'{o}s conjecture for spiders. 
https://arxiv.org/pdf/1804.06567.pdf 
(2018). Accessed 19 April 2018

\bibitem{Faudree}
Faudree R. J., Schelp R. H.: 
Path Ramsey numbers in multicolorings.
J. Combin. Theory B 19, 150–-160 (1975)

\bibitem{Kalai}
Kalai G.:
Micha Perles’ geometric proof of the Erd\H{o}s--S\'{o}s conjecture for caterpillars. 
https://gilkalai.wordpress.com/2017/08/29/micha-perles-geometric-proof-of-the-erdos-sos-conjecture-for-caterpillars/ 
(2017). Accessed 19 April 2018

\bibitem{Kopylov}
Kopylov G. N.: Maximal paths and cycles in a graph. 
Dokl. Akad. Nauk SSSR 234, 19–-21 (1977) 
(English translation: Soviet Math. Dokl. 18 593–-596 (1977))

\bibitem{McL}
McLennan A.:
The Erd\H{o}s--S\'{o}s conjecture for trees of diameter four. 
J. Graph Theory 49, 291–-301 (2005)

\bibitem{Mos}
Moser W., Pach J.:
Recent developments in combinatorial geometry.
In: Pach J. (ed.) New Trends in Discrete and Computational Geometry,
pp. 281--302.
Algorithms and Combinatorics, vol. 10,
Springer, Berlin, Heidelberg (1993) 

\bibitem{Sid}
Sidorenko A.: 
Asymptotic solution for a new class of forbidden $r$-graphs. 
Combinatorica 9, 207–-215 (1989)

\bibitem{Woz}
Wo\'{z}niak M.:
On the Erd\H{o}s--S\'{o}s conjecture. 
J. Graph Theory 21 229–-234 (1996)

\end{thebibliography}
\end{document}